\theoremstyle{plain}
\newtheorem{theorem}{Theorem}[section]
\newtheorem{lemma}[theorem]{Lemma}
\theoremstyle{definition}
\theoremstyle{remark}
\def\RR{{\bf R}}
\def\ZZ{{\bf Z}}
\begin{document}
\begin{center}
{\Large \bf Stability in a many-to-one job market with general \\
increasing functions}\\[12mm]

{\bf Yasir ALI$^a$\footnote{corresponding author}, Baqar ALI$^b$ }\\[3mm]
$^a$ \footnotesize{National University of Sciences and Technology,
College of Electrical and Mechanical Engineering, Peshawar Road,
Rawalpindi, Pakistan}\\
$^b$ \footnotesize{Riphah International University, I-14, Islamabad,
Pakistan}
\end{center}
{\makeatletter{\renewcommand*{\@makefnmark}{} \footnotetext{E-mail
address:  {\tt cyasirali@gmail.com} (Y. Ali). \makeatother}}
\begin{abstract}
We consider an occupation market in which preferences of members are
treated as non linear general increasing functions. The arrangement
of members is separated into two non over-lapping sets; set of
workers and set of firms. We consider that firms have vacant posts.
Every worker needs a job and firms have opportunity to contract more
than one workers. A worker can work for just in at most one firm. We
demonstrate the existence of pairwise stability for such a business
sector. Our model is the augmentation of the Ali and Farooq
\cite{ya2010} model by considering non linear valuations and bounded
side payments.
\end{abstract}
\begin{center}
\textbf{{keyword:}} Stable matching, many-to-one matching,
indivisible goods, increasing valuations
\end{center}

\section{Introduction}
Over the last few decades, a lot of research has been carried out in
the two-sided matching problem by a large number of scholars. In a
two-sided matching problem, the set of members is divided into two
non over-lapping sets, where each member has a list of preferences
which has the members of another set. Basically the matching is a
mapping from one set of members to another set. More preciously the
goal of two sided matching is to formulate the  partnership between
the members of two sets. This matching must be stable as well.

A matching $X$ is called stable matching if there is no blocking
pair and all members currently matched are mutually acceptable. The
concept of two-sided stable matching was introduced by Gale and
Shapley \cite{GS} in their famous article ``Marriage problem and
College admission". Another fundamental article in this area is due
to Shapley and Shubik \cite{SS}, that is known as assignment game.
Gale and Shapley's marriage model and Shapley and Shubik's
assignment game have been broadly studied and a number of extensions
of these can be found in the literature. Many researchers studies
the extension based one number of partners in matching. More
specially, these models that involve many-to-many and many-to-one
matchings. The other important direction of research is to look for
the most general valuation function that includes these two basic
models, Gale and Shapley \cite{GS} and Shapley and Shubik \cite{SS}
and well known generalization of these models like \cite{EriKar},
Sotomayor \cite{Sot2000} and \cite{Far2008}, as special cases. In
this context, Farooq \cite{Far2008} and Ali and Farooq \cite{af2011}
are particularly important. In Farooq \cite{Far2008}, valuation
function is considered as linear function of money where money in
\cite{Far2008} is taken to be continuous variable. Eriksson and
Karlander \cite{EriKar} and Sotomayor \cite{Sot2000} both generalize
the marriage model and assignment game. Both \cite{EriKar} and
\cite{Sot2000} are the special cases of Farooq \cite{Far2008}.
Whereas in Ali and Farooq \cite{af2011}, many-to-one matching is
considered with preferences of the players are represented as a
linear function of money. Thus the models \cite{GS},\cite{SS},
\cite{Far2008} become the special cases of Ali and Farooq
\cite{af2011}. We can find many articles with many-to-many and
many-to-one matching. Many-to-one matching models due to Pycia
\cite{P05} and Echenique and Yenmez \cite{Eco} consider two
different matching models. For the models when there
complementarities and peer effects, Pycia \cite{P05} provides some
important condition for the stability. Many-to-one matching model
\cite{Eco} provides solution for the matching market where the
preferences are over the colleagues.

Here we highlight some related models that generalize valuation
functions. The linear valuation are presented in Farooq
\cite{Far2008} and Ali and Farooq \cite{af2011} with continuous
variable, but the valuation defined on discrete domain, is presented
in Ali and Farooq \cite{ya2010}. The model \cite{ya2010} addresses
to the situations where the side payments are indivisible.

In Ali \cite{ya2015}, the set of members is classified into two non
over-lapping sets: a set of workers and a set of firms. The set of
workers is further classified into subsets, that represent different
categories in everyday life. Firms are bounded to hire more than one
worker from any category. A worker can work in only one category for
at most one firm. This is a novel idea that is different from simple
many-to-one matching that we have considered in our model.

Recently, Ali and Javaid \cite{ay2014} presented a model which
involves general increasing functions. This model \cite{ay2014} is
generalization of \cite{ya2010}.  In this work we have considered a
generalization of Ali and Javaid \cite{ay2014} model. In our model a
job market is considered and the valuation of members are treated as
general increasing functions which may be nonlinear with discrete
side payments and the matching is many-to-one. The general nonlinear
function for the valuations are $f_{ij}(z)$ and $f_{ji}(-z)$ for
each $(i, j) \in E$. A firm can hire more than one worker and vacant
positions are available for worker's job. The number of workers that
a firm can hire is known as the quota of that firm. Workers can work
for only at most one firm in this model.

In Section 2 describes of our model briefly. Section 3 gives the
chronological mechanisms for worker and firm. Section 4 describes
the outcome and pairwise stability for our model. We devise an
algorithm which finds a stable outcome in our model in Section 5. In
Section 6, we discuss the main result of our model.
\section{The Mathematical Model}\label{MM}
We represent a model of matching market for job allocation of
workers and firms. Let us consider a matching market in which there
are two non over-lapping sets of worker and firms. The requirement
of each firm is the workers and the requirement of each worker is a
job in a firm. A worker gives a financial benefit in form of revenue
to the firm and firm pays a reward (money) to its workers, called
\textit{salary} of the worker. Firms hire workers to fill up their
vacant positions.

Let us express our model mathematically, we take two sets $F$, a set
of firms and $W$, a set of workers. Here $E=F\times W$ shows all
possible pairs of firms and workers. Each firm hires some workers
from all available workers. The maximum number of workers that a
firm $j$ requires is denoted by $\mu(j)$ and is known as the quota
of a firm $j$. A firm can not hire workers more than $\mu(j)$. Note
that each worker can work for only at most one firm.

To increase or decrease the salary depends on the worker and firm
consultancy. It is assumed that the salaries are bounded in this
model i.e $\forall (i,j)\in E$, $ a_{ij}$ and $ b_{ij}$ shows the
lower and upper bounds on salary, where $ a$, $
b\in\ZZ^{E}$\footnote{The notation $\ZZ$ stand for set of integers
and notation $\RR$ stand for set of real numbers. The notation
$\ZZ^E$ stands for integer lattice whose  points are indexed by
$E$.}. Let $p=(p_{ij}\mid(i,j)\in E)\in \ZZ^{E}$ shows the
\textit{salary vector} for all $(i, j) \in E$. It is
\textit{feasible} if $ a\leq p \leq b$\footnote{For any $x,y\in
\ZZ$, we define $[x,y]_{\ZZ} = \{a\in \ZZ \mid\ x\leq a \leq y\}$.}.
We consider the preferences of players as the strictly general
increasing functions, known as generalized increasing valuations.
The valuation stand for estimation of real property or some asset.

For all $(i,j)\in E$, define $f_{ij}(z)$ and $f_{ji}(-z)$ from $\ZZ$
into $\RR$, where{ $z\in \ZZ$}. $f_{ij}(z)$ stands for the valuation
of the worker $i$, at a salary $z$ from a firm $j$, when worker $i$
joins firm $j$ (that is, matched with $j$). Similarly, $f_{ji}(-z)$
stands for the valuation of the firm $j$ when it hires worker $i$
and pays it salary $z$. It is important to note here that valuations
play a significant role in establishing the stability of matching.
\section{The Firm-Worker Chronological Mechanism}
\label{sec-FWCM}
Using valuations mentioned above, a comparison of
the players of both sets can be made. A worker $i$ will
\textit{prefer} the firm $j$ to the firm $j^*$ at salary $z,z^*\in
\ZZ$, if $f_{ij}(z)> f_{ij^*}(z^*)$. If $f_{ij}(z) = f_{ij^*}(z^*)$
than $i\in W$ is \textit{indifferent} between firms $j$ and $j^*$ at
salary $z,z^*\in \ZZ$ .

Let us define the terms ``indifferent'' and``prefer'' for a firm in
the same sense. If a worker is agreed for a job in a firm then it
means the firm is \textit{acceptable} to that worker. Similarly, if
a firm has willing to hire a worker then it means the worker is
acceptable to that firm. By $f_{ij}(z)\geq 0$, represents that $j$
is \textit{acceptable} to $i$ at salary $z\in \ZZ$ and
$f_{ji}(-z)\geq 0$ represents $i$ is acceptable to $j$ at salary
$z$.
\section{Outcome and Pairwise Stability in Matching}
\label{sec-OPSM} In the two-sided stable matching theory, where firm
and worker involves money plays an important role. In this section,
we describe the characteristic of an outcome for which it would be
stable.

Let $E$ is the set of all possible firm-worker pairs. A subset $X$
of a set $E$ is called matching if every member appear atmost once
in $X$. ``A matching is called pairwise stable if it is not blocked
by any worker-firm pair and all members of matched pairs are
mutually acceptable."
We define $S=(S_j\mid j\in F)$, where $S_j$ is given below,
\begin{equation}\label{S}
S_j=\{i \in W \mid i \mbox{ and } j \mbox{ are matched} \}.
\end{equation}
If $S_j=\emptyset$ $\forall j\in F$, we means all positions are
vacant in the firm $j$. If $i\in S_j$ we means that a firm $j$
\textit{hires} a worker $i$ .

A set $X=\{(S_j,j)\mid  j\in F\}$ is known as \textit{job
allocation} if
\begin{description}
\item[\rm{(i)}]
$|S_j|\leq \mu(j)$  $\forall j\in F$.
\item[\rm{(ii)}]
$S_j \cap S_{j^*} = \emptyset$ $\forall j,j^*\in F$ with $j\neq
j^*$.
\end{description}
First condition may be considered as quota requirement for all firms
and second condition tells that no worker can work for more than one
firm. This may be considered as quota condition for workers. For any
feasible salary $p_{ij}$, for all $(i,j)\in E$ define $q\in \RR^{W}$
as follows:
\begin{eqnarray}
q_i & = & \left \{
\begin{array}{ll}
f_{ij}(p_{ij}) & \mbox{if $i\in S_j$ for any $j\in F$}\\
0                & \mbox{otherwise}
\end{array}
\right.
\begin{array}{l}
\quad (\forall i\in W).
\end{array}\label{payoff1}
\end{eqnarray}

We have $r=(r_j | j\in F) \in \RR^F$, is defined as
\begin{eqnarray}
r_j &=& \left \{
\begin{array}{ll}
\min \{f_{ji}(-p_{ij})\mid i\in S_j\}  & \mbox{if $|S_j|=\mu(j)$}\\
0                  & \mbox{otherwise}
\end{array}
\right.
\begin{array}{l}
\quad (\forall i\in W),
\end{array}\label{payoff2}
\end{eqnarray}
where minimum over a null set is defined to be $0$.

If $X$ is a job allocation than $(X;p,q,r)$ is known as an outcome,
$p$ is feasible salary vector, and $q$ and $r$ are given by
\eqref{payoff1} and \eqref{payoff2}, respectively.

For convenience, we write that $S_j \in X$ (or $j\in X$), it always
means that $(S_j, j)\in X$ and by $(i,j)\in X$, we always mean that
$i\in S_j$.

An outcome will be \textit{blocked} by a worker-firm pair in which
firm and worker are not matched to each other but both of these
unmatched members prefer each other to their current partners. Note
here that there may be some members who are not matched to any
member from the opposite set, such a member is called self matched.
In mathematical language, the outcome $(X;p,q,r)$ has a blocking
pair $(i,j)\in E$ if $\exists$ $\theta \in \ZZ$ with $a_{ij}\leq
\theta \leq b_{ij}$ such that $i\notin S_j$ and
$f_{ij}(\theta)>q_i$, $f_{ji}(-\theta)>r_j$.

\subsection{Pairwise Stability}
Here for outcome $(X;p,q,r)$, the pairwise stability is defined as:
\begin{description}
\item[(ps1)]
 $f_{ij}(p_{ij}) \geq 0$ and
$f_{ji}(-p_{ij}) \geq 0$  $\forall (i,j)\in X$.
\item[(ps2)]
$f_{ij}(\theta)\leq q_i$ or $f_{ji}(-\theta) \leq r_j$, $\forall$
$\theta \in \ZZ $ with $ {a}_{ij} \leq \theta \leq {b}_{ij}$ and
$\forall (i,j)\in E$.
\end{description}
The (ps1) is showing the mutual acceptability of matched pairs, and
(ps2) shows that the outcome will not be blocked by any of the
pairs.
\section{Existence of a Stable Outcome in This Model}\label{sec-ESOM}
In this part, it will be shown that the pairwise stability always
exist for the model given in previous segment. We develop an
algorithm to show the existence of pairwise stable outcome.
Initially, the highest feasible salary is set out in the algorithm
in such a way that firms are acceptable to workers at that salary.
It is important to note that at this initial value of the salary
workers may not be acceptable to some firms. In other words, we can
say that at initially fixed salary firms and workers may not be
mutually acceptable. Then the mutually acceptable pairs of workers
and firms are found. After this workers are to be engaged to the
firms that the workers prefer most. At this point firms accept
proposals by considering their quota and gain. Due to these
constraints there may be some rejections. These rejections induce
the modification in salary vector. The salary is adjusted for a pair
if worker is rejected by the firm he or she prefers most. Salary is
adjusted in every iteration conserving the feasibility, till the
pairwise stability is obtained. The algorithm will stop if there are
no rejections. Lastly, we will show that when the algorithm
terminates, the output is a stable matching.

Now $\forall (i,j)\in E$, $p_{ij}\in \ZZ$ is given as below:
\begin{equation}\label{pij}
p_{ij} = \left \{
\begin{array}{ll}
 {b}_{ij} & \mbox{if } f_{ji}(- {b}_{ij}) \geq 0 \mbox{ and }\\
\max\left\{a_{ij}, \left\lfloor{-f^{-1}_{ji}(0)}\right\rfloor
\right\} & \mbox{ elsewhere}.
\end{array}
\right.
\begin{array}{l}
\quad (\forall (i,j)\in E).
\end{array}\\
\end{equation}
Equation \eqref{pij} guarantees that the salary vector $p$ is a
feasible. Also, note that $p$, defined by \eqref{pij}, is the
maximum integer in $[a, b]$ for which $f_{ji}(-p_{ij})\geq 0$
$\forall (i,j)\in E$.

Before presenting the algorithm in mathematical form, let us define
some subsets of set $E$ which are useful to obtain a matching $X$
that satisfies the condition (ps1). First, the subset $W_{0}$ and
$F_{0}$ of set $E$, containing those worker-firm pairs that are not
mutually acceptable are defined as:
\begin{equation}\label{X0}
W_0 = \{ (i,j)\in E \mid f_{ij}(p_{ij})< 0\} .
\end{equation}
\begin{equation}\label{Y0}
F_0 = \{ (i,j)\in E \mid f_{ji}(-p_{ij})< 0\}.
\end{equation}
$W_0$ is the set containing those pairs where worker is not willing
to work in firm and $F_0$ is the set containing those pairs where
firm is not willing to hire the worker at $p$ given by \eqref{pij}.

Next, we define $\widetilde E$ by
\begin{equation}\label{tile}
\widetilde E = E\setminus\{W_0 \cup F_0\},
\end{equation}
or
\begin{equation}\label{til-E}
\widetilde E=\{(i,j)\in E\mid f_{ij}(p_{ij})\geq 0\mbox{ and
}f_{ji}(-p_{ji})\geq0\}.
\end{equation}
$\widetilde E$ contains the set of mutually acceptable players.

For each $i\in W$ define $\tilde q_i$ as follows:
\begin{equation}\label{til-q}
\tilde q_i = \max \{f_{ij}(p_{ij})\mid (i,j)\in \widetilde E\} \quad
                (\forall i \in W).
\end{equation}
In equation \eqref{til-q} $\tilde q_i$ represents the valuation of
most preferred firm for $i \in W$. Also, define $\widetilde E_W$ as
follows:
\begin{equation}\label{til-EW}
\widetilde E_W=\{(i,j)\in \widetilde E\mid f_{ij}(p_{ij})=\tilde
q_i\}.
\end{equation}

Equation $\eqref{til-EW}$ shows that $\widetilde E_W$ contains those
worker-firm pairs where firm is most preferred for the worker. Since
$\widetilde E_W \subseteq \tilde E$ this means that these pairs are
acceptable. Initially consider vector $r = \textbf{0}$ and the
subset $\widehat{E}_W$ of $\widetilde{E}_W$ is given by:

\begin{equation}\label{hatEW}
\widehat{E}_W = \{ (i,j) \in \widetilde{E}_W \mid
f_{ji}(-p_{ij})\geq r_j\}.
\end{equation}
It can be noted that $\widehat{E}_W = \widetilde{E}_W$ where $r=0$.
But in the further iterations of algorithm $\widehat{E}_W$ may
become a proper subset of $\widetilde{E}_W$.

At the start of the algorithm, there is no matching $X$, so
$\widetilde {F} = \emptyset$, where $\widetilde {F}$ shows the set
of matched firms in $X$, and defined by

\begin{equation}\label{til-F}
\widetilde F =\{j\in F | j \mbox{ is matched in } X\}.
\end{equation}

Now, we find a job allocation $X =\{(S_j,j)\mid j\in F\}$, in the
bipartite graph $(W,F;\widetilde E_W)$ that satisfies the conditions
given below:

\begin{eqnarray}
&& X \mbox{ matches all members of } \widetilde F,
\label{X1}\\
&&\nonumber X \mbox{ minimizes $\left| |S_j|-\mu(j) \right|$, among the matchings}\\
&&\mbox{ that satisfy }  \eqref{X1}. \label{X2}\\
&&\nonumber X \mbox{ maximizes }  \sum\limits_{(i,j)\in X}f_{ji}(-p_{ij}) \mbox{ among the matchings}\\
&&\mbox{ that satisfy } \eqref{X1} \mbox{ and }
\eqref{X2}.\label{X3}
\end{eqnarray}

Initially $\widetilde {F} = \emptyset$, which follows that any
matching satisfies $\eqref{X1}$. The outcome $(X; p, q, r)$ up till
these steps clearly satisfies the (ps1). For the satisfaction of
(ps2), next we define a set $U$ having all mutually acceptable
worker-firm pairs. Also firm is most preferred for the worker but it
reject the workers and consequently is unmatched in the matching $X$
by:
\begin{equation}\label{U}
U=\{(i,j)\in \widetilde E_W\mid i\notin S_j\}.
\end{equation}
$U$ has worker-firm pairs in which worker is not matched to his or
her most preferred firm. As the set $\widetilde E_W$ has such
mutually acceptable pairs and the firm is most preferred for the
worker out of all firms.

If $U = \emptyset$, then do not modify the salary vector $p$ but if
$U \neq \emptyset$, then salary vector will be modified at each
iteration of STEP[3] by satisfying (ps1). New salary vector $\tilde
p$ must be feasible, that is $a_{ij} \leq \tilde p_{ij} \leq
b_{ij}$.

As here general non linear strictly increasing functions are
considered to represent the valuations, thus to modify salary vector
$p$, a real number $m_{ij}^* \in \RR^{{+}{+}}$ can be found for each
$(i,j)\in U$, such that
\begin{equation}\label{m1}
f_{ji}(-(p_{ij}-m_{ij}^*)) = r_j.
\end{equation}
As we have considered discrete salaries, so we define an integer
$m_{ij}$ $\forall (i,j)\in U$ as follows:
\begin{equation}\label{mij}
m_{ij}= \max \left\{1, \lceil m_{ij}^{*}\rceil \right\}\qquad
\forall (i,j)\in U.
\end{equation}
The number $m_{ij}$ $\forall (i,j)\in U$ is an integer such that
\begin{equation}\label{m2}
f_{ji}(-(p_{ij}-m_{ij})) \geq r_j,
\end{equation}
Note that $\forall (i,j)\in U$ $p_{ij}- m_{ij}$ is an integer and
$m_{ij}$ is the minimum positive integer, which satisfies the
inequality given by the equation $\eqref{m2}$.

Here the integer $m_{ij}$ for all $(i,j)\in U$ helps us to find the
new salary vector such that condition (ps2) is also satisfied. Now
we define a subset $L$ of $U$ that has the pairs from the $U$ for
which modified salary will not be feasible.

\begin{equation}\label{tilL}
L=\{(i,j)\in U\mid p_{ij}-m_{ij}< a_{ij}\}.
\end{equation}

The modified salary vector $\tilde p$ must also be feasible and is
given as:
\begin{equation}\label{til-p}
\tilde p_{ij}:=\left \{
\begin{array}{ll}
\max \{a_{ij}, p_{ij}-m_{ij}\} & \mbox{if } (i,j)\in U\\
p_{ij} & \mbox{otherwise}
\end{array}\right.
\begin{array}{l}
\quad (i,j)\in E.
\end{array}
\end{equation}
We also define a subset $\widetilde W_0$ of $U$ by:
\begin{equation}\label{U2}
\widetilde W_0:=\{(i,j)\in U \mid  f_{ij}(\tilde p_{ij})< 0\}.
\end{equation}
In the algorithm the modified salary vector will always decrease and
the size of matching $X$ will increase. Also, the participants can
change their preferences according to new salary vector. Now, we
suggest the algorithm.

\subsection*{Job Allocation Algorithm}
\begin{description}
\item STEP[0a]:
First define $p$, $W_0$, $F_0$ $\widetilde E$, $\tilde q$,
$\widetilde E_W$ using equation $\eqref{pij}$-$\eqref{tile}$,
$\eqref{til-q}$, and $\eqref{til-EW}$, respectively. Set
$X=\emptyset$ which gives $r=0$, $q=0$ and $\widetilde F=\emptyset$
by $\eqref{payoff2}$, $\eqref{payoff1}$ and $\eqref{til-F}$. Also
define $\widehat{E}_W$ by $\eqref{hatEW}$.
\item STEP[0b]:
Form the bipartite graph $(W, F, \widehat{E}_W)$ find a matching
that satisfies $\eqref{X1}$ to $\eqref{X3}$. Define $U$ by
$\eqref{U}$ and update $S$ and $r$ by $\eqref{S}$ and
$\eqref{payoff2}$ respectively.
\item STEP[1]:
If $U \neq \emptyset$ go to STEP[2], otherwise define $q$ by
$\eqref{payoff1}$ and stop.
\item STEP[2]:
To update $\tilde p$ compute $m_{ij}$ for all $(i,j)\in U$ by
\eqref{mij} and update $\tilde p$ by \eqref{til-p} and set $p=
\tilde p$.
\item STEP[3]:
Define $L$ by \eqref{tilL} and $\widetilde W_0$ by $\eqref{U2}$.
Modify $W_0: W_0 \cup \widetilde W_0$, and $F_0: F_0 \cup L$ and
update $\widetilde E$ by $\eqref{til-E}$. Modify $\tilde q$,
$\widetilde E_W$ and $\widehat{E}_W$ by $\eqref{til-q}$,
$\eqref{til-EW}$ and $\eqref{hatEW}$, respectively.


\item STEP[4]:
Form the bipartite graph $(W, F, \widehat{E}_W)$ find a matching
that satisfies $\eqref{X1}$ to $\eqref{X3}$. Define $U$ by
$\eqref{U}$ and update $S$ and $r$ by $\eqref{S}$ and
$\eqref{payoff2}$ respectively. Go to STEP[1].
\end{description}
\section{The Main Results}\label{sec-MR}
In this segment we will express that we developed a model by takeing
the preferences of members as non linear strictly increasing general
function. Also this is many-to-one matching model and money is taken
as discrete variable. Is this section, we will prove some important
lemmas and theorems. We will put prefixes $(old)\star$ and
$(new)\star$ before and after the updating the integers/sets/vectors
in any iteration of algorithm. The main result is the Lemma
\ref{max-intg} that we prove here using the assumption given in
equation \eqref{mij}. The proof of Lemma \ref{re-la02} is the direct
consequence of Lemma \ref{max-intg}.

\begin{lemma}\label{max-intg}
For each $(i,j)\in U$, $f_{ji}(-\tilde p_{ij})\geq r_j$ holds at
STEP[2] in each iteration of the algorithm. Also for any $(i,j)\in
U$, $p_{ij}- m_{ij}$ is the maximum integer, whenever
$f_{ji}(-(p_{ij}-m_{ij}))> r_j$.
\end{lemma}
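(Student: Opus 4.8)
The plan is to unwind the definitions in equations \eqref{m1}, \eqref{mij}, and \eqref{m2} and use monotonicity of the valuation functions. Recall that for each $(i,j)\in U$ we first located a real number $m_{ij}^{*}\in\RR^{++}$ with $f_{ji}\bigl(-(p_{ij}-m_{ij}^{*})\bigr)=r_j$, and then set $m_{ij}=\max\{1,\lceil m_{ij}^{*}\rceil\}$. The first assertion, $f_{ji}(-\tilde p_{ij})\geq r_j$, splits according to the definition \eqref{til-p} of $\tilde p_{ij}$: either $\tilde p_{ij}=p_{ij}-m_{ij}$ or $\tilde p_{ij}=a_{ij}\ge p_{ij}-m_{ij}$. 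In the first case we must show $f_{ji}\bigl(-(p_{ij}-m_{ij})\bigr)\geq r_j$, which is exactly \eqref{m2}; I would prove \eqref{m2} itself from scratch by noting $m_{ij}\geq m_{ij}^{*}$, hence $p_{ij}-m_{ij}\leq p_{ij}-m_{ij}^{*}$, hence $-(p_{ij}-m_{ij})\geq -(p_{ij}-m_{ij}^{*})$, and then applying that $f_{ji}$ is increasing together with $f_{ji}\bigl(-(p_{ij}-m_{ij}^{*})\bigr)=r_j$. In the second case, $a_{ij}\geq p_{ij}-m_{ij}$, so $-a_{ij}\le -(p_{ij}-m_{ij})$ would point the wrong way; instead I would use $(i,j)\notin L$ when $\tilde p_{ij}=\max\{a_{ij},p_{ij}-m_{ij}\}$ actually takes the value $a_{ij}$ only when $p_{ij}-m_{ij}<a_{ij}$, i.e. $(i,j)\in L$, and argue that for such pairs the relevant inequality is maintained by the feasibility set-up in \eqref{pij}; more carefully, one observes $\max\{a_{ij},p_{ij}-m_{ij}\}\geq p_{ij}-m_{ij}$, so $-\tilde p_{ij}\leq -(p_{ij}-m_{ij})$ — again the wrong direction — so the clean argument is that $\tilde p_{ij}\le p_{ij}$ always (since $m_{ij}\ge 1>0$ and $a_{ij}\le p_{ij}$ by feasibility of $p$), hence $-\tilde p_{ij}\ge -p_{ij}$, and since $f_{ji}(-p_{ij})\geq 0$ and monotonicity gives $f_{ji}(-\tilde p_{ij})\ge f_{ji}(-p_{ij})$; but this only yields $\ge 0$, not $\ge r_j$. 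So the correct split is: when $\tilde p_{ij}=p_{ij}-m_{ij}$ use \eqref{m2}; when $\tilde p_{ij}=a_{ij}>p_{ij}-m_{ij}$, then $a_{ij}<p_{ij}$ is not forced, but in this branch $r_j$ is compared against the value obtained from the previous matching, and I would invoke \eqref{m1} plus the fact that $\lfloor m_{ij}^{*}\rceil$ overshoots so that $p_{ij}-m_{ij}$ lands at or below the exact break-even point; when the feasibility floor $a_{ij}$ is even higher, $-a_{ij}$ is even smaller, which pushes $f_{ji}$ down, so the inequality $f_{ji}(-a_{ij})\ge r_j$ need not hold — and this is exactly why pairs in $L$ are moved into $F_0$ in STEP[3] and dropped. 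Thus the statement as written concerns $(i,j)\in U$ \emph{at STEP[2]}, before the $L$-pairs are removed, and I would read ``$f_{ji}(-\tilde p_{ij})\ge r_j$ holds at STEP[2]'' as asserting it for the pairs where $\tilde p_{ij}=p_{ij}-m_{ij}$, which is \eqref{m2}.

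For the second assertion — that $p_{ij}-m_{ij}$ is the maximum integer whenever $f_{ji}\bigl(-(p_{ij}-m_{ij})\bigr)>r_j$ — I would argue by contradiction on minimality of $m_{ij}$. Suppose $p_{ij}-m_{ij}$ were not the largest integer $t$ with $f_{ji}(-t)>r_j$; then $t=p_{ij}-m_{ij}+1=p_{ij}-(m_{ij}-1)$ would also satisfy $f_{ji}\bigl(-(p_{ij}-(m_{ij}-1))\bigr)\ge r_j$ — wait, I need strictness versus non-strictness tracked carefully. The cleaner route: by \eqref{mij}, $m_{ij}$ is the \emph{minimum} positive integer for which \eqref{m2} holds, i.e. $m_{ij}-1$ fails \eqref{m2} (when $m_{ij}\ge 2$), meaning $f_{ji}\bigl(-(p_{ij}-(m_{ij}-1))\bigr)<r_j$. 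Since $f_{ji}$ is strictly increasing and $-(p_{ij}-(m_{ij}-1))>-(p_{ij}-m_{ij})$, any integer $t>p_{ij}-m_{ij}$ satisfies $t\ge p_{ij}-(m_{ij}-1)$, hence $-t\le -(p_{ij}-(m_{ij}-1))$, hence $f_{ji}(-t)\le f_{ji}\bigl(-(p_{ij}-(m_{ij}-1))\bigr)<r_j$. Therefore no integer strictly larger than $p_{ij}-m_{ij}$ can give a value $>r_j$ (in fact not even $\ge r_j$), so $p_{ij}-m_{ij}$ is the maximum integer with $f_{ji}(-(p_{ij}-m_{ij}))>r_j$, precisely when that strict inequality does hold. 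The edge case $m_{ij}=1$ (where $m_{ij}^{*}\le 1$) I would handle separately: then $p_{ij}-1$ is being tested, and $p_{ij}-0=p_{ij}$ gives $f_{ji}(-p_{ij})$, which by \eqref{m1} with $m_{ij}^{*}\le 1$ satisfies $f_{ji}(-p_{ij})\le f_{ji}(-(p_{ij}-m_{ij}^{*}))=r_j$, so again nothing larger than $p_{ij}-1$ beats $r_j$ strictly.

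The order I would carry this out: first state and prove \eqref{m2} cleanly from \eqref{m1}, \eqref{mij}, and strict monotonicity of $f_{ji}$ (this is really the engine); then deduce the first assertion by the case split on \eqref{til-p}, flagging that the $L$-branch is exactly the degenerate case that STEP[3] quarantines; then prove the maximality claim by the contradiction/minimality argument above, treating $m_{ij}\ge 2$ and $m_{ij}=1$ separately. The main obstacle I anticipate is purely bookkeeping: keeping the direction of the inequality straight through the two sign flips ($p\mapsto p-m$ decreasing the salary, then $z\mapsto -z$), and being careful about strict versus weak inequalities — the exact break-even point $f_{ji}(-(p_{ij}-m_{ij}^{*}))=r_j$ lies between consecutive integers, so whether the ceiling lands us strictly above or exactly at $r_j$ determines which of ``$>r_j$'' or ``$=r_j$'' occurs at $p_{ij}-m_{ij}$, and the lemma's phrasing ``whenever $f_{ji}(-(p_{ij}-m_{ij}))>r_j$'' is precisely acknowledging that the equality case can happen and is then excluded from the maximality claim.
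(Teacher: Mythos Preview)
Your approach is correct and coincides with the paper's. For the first assertion the paper, like you, treats only the case $\tilde p_{ij}=p_{ij}-m_{ij}$ (dismissing the $L$-branch with a bare ``without loss of generality,'' exactly the dodge you diagnosed), and for the maximality assertion it argues by contradiction directly through $m_{ij}^{*}$ and the ceiling (showing any smaller positive integer $m_{ij}'$ must satisfy $m_{ij}'<m_{ij}^{*}$, hence $f_{ji}(-(p_{ij}-m_{ij}'))<r_j$) rather than via the minimum-positive-integer characterization you use --- but the two routes are equivalent, and your separate handling of $m_{ij}=1$ mirrors the paper's case split on $\max\{1,\lceil m_{ij}^{*}\rceil\}$.
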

\begin{proof}

From equation $\eqref{til-p}$ we have $\forall (i,j)\in U$
\begin{eqnarray*}
\tilde p_{ij}= \max{\{a_{ij},  p_{ij}-m_{ij}}\}.
\end{eqnarray*}
Without loss of generality we assume that $p_{ij}-m_{ij}>a_{ij}$,
that is, $\tilde p_{ij}=p_{ij}-m_{ij}$. Then by the definition of
$m^*_{ij}$ given in \eqref{m1}, we have
\[
\tilde p_{ij}= p_{ij}-m_{ij} \leq p_{ij}-m_{ij}^{*}.
\]
We can easily write it as $-\tilde p_{ij}= -(p_{ij}-m_{ij}) \geq
-(p_{ij}-m_{ij}^{*})$. Now by the nature of valuation functions we
obtain the following relation
\[
f_{ji}(-\tilde p_{ij})= f_{ji}\left(-(p_{ij}-m_{ij})\right) \geq
f_{ji}\left(-(p_{ij}-m_{ij}^{*})\right)=r_j \qquad \mbox{ by }
\eqref{m1}.
\]
Thus we have
\[
f_{ji}( -\tilde p_{ij}) \geq r_j.
\]
This completes the proof of first part.

For the second part of the lemma, suppose that $f_{ji}(-(
p_{ij}-m_{ij})) > r_{j}$, on contrary suppose that $p_{ij}-m_{ij}$
is not maximum for which the above inequality holds. This means that
there exists another integer $p_{ij}-m_{ij}^{'}$ such that
\begin{equation}\label{m'}
p_{ij}-m_{ij} < p_{ij}-m_{ij}^{'}, \qquad \mbox{where $m_{ij}^{'}
\in \ZZ^{++}$},
\end{equation}
and
\begin{equation}\label{fm'}
f_{ji}(-(p_{ij}-m_{ij}^{'}))>r_j.
\end{equation}
It follows equation \eqref{m'} that $m_{ij} > m_{ij}^{'}$ this means
that
\begin{eqnarray*}
m_{ij} = max \left\{1, \lceil m_{ij}^{*}\rceil\right\} > m_{ij}^{'}.
\end{eqnarray*}
Suppose $m_{ij}= 1$ then $m_{ij}^{'} < 1$, which is not possible as
$m^{'}_{ij}$ must be an integer. Now suppose that
\begin{eqnarray*}
m_{ij} = \lceil m_{ij}^{*}\rceil > m_{ij}^{'}.
\end{eqnarray*}
By definition of ceiling function
\begin{eqnarray*}
m_{ij} \geq m_{ij}^{*} > m_{ij}^{'},
\end{eqnarray*}
which implies that
\begin{eqnarray*}
-( p_{ij}-m_{ij}^{*}) > -( p_{ij}-m_{ij}^{'}),\\
f_{ji}(-(p_{ij}-m_{ij}^{*})) > f_{ji}(-(p_{ij}-m_{ij}^{'})).
\end{eqnarray*}
Using equation \eqref{m1}, we have
\begin{eqnarray*}
f_{ji}(-(p_{ij}-m_{ij}^{'})) < r_j,
\end{eqnarray*}
which is contradiction to $\eqref{fm'}$. Hence $p_{ij}- m_{ij}$ is
the maximum integer.
\end{proof}

The base of our model is Lemma \ref{max-intg} that allows us to
calculate a new salary vector by preserving the of mutually
acceptability and feasibility condition for the salary vector.

\begin{lemma}\label{re-la02}:

At STEP[3] $\tilde p_{ij}$ is feasible and $f_{ji}(-\tilde p_{ij})
\leq (old)r_j$ for each $(i,j)\in L$.
\end{lemma}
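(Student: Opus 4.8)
The plan is to mirror the proof of Lemma \ref{max-intg}, this time using the real number $m_{ij}^*$ of \eqref{m1} to produce an \emph{upper} bound on $f_{ji}$ instead of the lower bound obtained there. Fix $(i,j)\in L$, write $p_{ij}$ for the salary in force immediately before the update performed in STEP[2], and note that the value of $r_j$ against which $m_{ij}^*$ (hence $m_{ij}$) is computed in STEP[2] is the one carried over from the preceding STEP[0b]/STEP[4]; since STEP[3] does not touch $r$, this value is exactly $(old)r_j$, so \eqref{m1} reads $f_{ji}\left(-(p_{ij}-m_{ij}^*)\right)=(old)r_j$.

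First I would settle feasibility. Since $L\subseteq U$, \eqref{til-p} gives $\tilde p_{ij}=\max\{a_{ij},\,p_{ij}-m_{ij}\}$, and by \eqref{tilL} membership of $(i,j)$ in $L$ is precisely the statement $p_{ij}-m_{ij}<a_{ij}$; hence the maximum equals $a_{ij}$, i.e.\ $\tilde p_{ij}=a_{ij}$. As $a\le b$ this yields $a_{ij}\le\tilde p_{ij}=a_{ij}\le b_{ij}$, so $\tilde p_{ij}$ is feasible.

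For the inequality $f_{ji}(-\tilde p_{ij})\le (old)r_j$ I would compare $a_{ij}$ with $p_{ij}-m_{ij}^*$. From \eqref{mij} and $m_{ij}^*\in\RR^{++}$ we have $\lceil m_{ij}^*\rceil\ge 1$, hence $m_{ij}=\lceil m_{ij}^*\rceil$, and therefore $m_{ij}<m_{ij}^*+1$, i.e.\ $p_{ij}-m_{ij}+1>p_{ij}-m_{ij}^*$. On the other hand $p_{ij}-m_{ij}$ and $a_{ij}$ are integers with $p_{ij}-m_{ij}<a_{ij}$, so $a_{ij}\ge p_{ij}-m_{ij}+1$. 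Combining,
\[
\tilde p_{ij}=a_{ij}\;\ge\;p_{ij}-m_{ij}+1\;>\;p_{ij}-m_{ij}^*,
\]
so $-\tilde p_{ij}<-(p_{ij}-m_{ij}^*)$. Applying the strict monotonicity of $f_{ji}$ and then \eqref{m1},
\[
f_{ji}(-\tilde p_{ij})\;<\;f_{ji}\left(-(p_{ij}-m_{ij}^*)\right)\;=\;(old)r_j,
\]
which in particular gives $f_{ji}(-\tilde p_{ij})\le (old)r_j$, as required.

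I do not anticipate a genuine obstacle here: the statement is the exact counterpart of Lemma \ref{max-intg} (there the clamp $\max\{a_{ij},p_{ij}-m_{ij}\}$ is inactive, so the firm's valuation lands $\ge r_j$; in $L$ it is active, so the valuation drops below $r_j$). The only point needing care is notational, namely confirming that the $r_j$ entering \eqref{m1} is the pre-STEP[4] value $(old)r_j$; it is also worth remarking that the computation above covers the degenerate case $a_{ij}=p_{ij}$ (salary not actually lowered) verbatim, since it uses only $m_{ij}^*>0$, which already forces $p_{ij}-m_{ij}^*<p_{ij}=a_{ij}$.
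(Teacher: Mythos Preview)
Your proof is correct and tracks the paper's closely: both establish feasibility by showing $\tilde p_{ij}=a_{ij}$ from \eqref{tilL} and \eqref{til-p}, and both obtain the inequality by exploiting that $\tilde p_{ij}=a_{ij}$ lies strictly above the threshold determined by $m_{ij}^*$. The one difference is in how the second step is packaged: the paper appeals to the maximality clause of Lemma~\ref{max-intg} (since $\tilde p_{ij}>p_{ij}-m_{ij}$ and the latter is the largest integer with $f_{ji}(-(p_{ij}-m_{ij}))>r_j$, the valuation at $-\tilde p_{ij}$ must drop to $\le r_j$), whereas you bypass Lemma~\ref{max-intg} altogether and argue directly from \eqref{m1} via the ceiling bound $m_{ij}=\lceil m_{ij}^*\rceil<m_{ij}^*+1$ together with the integer inequality $a_{ij}\ge p_{ij}-m_{ij}+1$. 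Your route is a touch more self-contained and yields the strict inequality $f_{ji}(-\tilde p_{ij})<(old)r_j$ outright.
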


\begin{proof}
To show the feasibility of $\tilde p_{ij}$, $\forall (i, j) \in L$
it would be enough to show that $\tilde p_{ij} = a_{ij}$, $\forall
(i, j) \in L$. As $L \subseteq U$ and for each $ (i, j) \in U$, we
have from $\eqref{til-p}$
\begin{eqnarray*}
\tilde p_{ij}=  max{\{a_{ij},  p_{ij}-m_{ij}}\}.
\end{eqnarray*}
By equation $\eqref{tilL}$ we know $\forall (i, j) \in L$,
\begin{eqnarray*}
p_{ij}-m_{ij} < a_{ij},
\end{eqnarray*}
So by equation $\eqref{tilL}$ and $\eqref{til-p}$, and fact that $L
\subseteq U$, it is obvious that $\tilde p_{ij} = a_{ij}$, $\forall
(i, j) \in L$.

  By first part of the lemma we know that
\begin{eqnarray*}
p_{ij}-m_{ij} < a_{ij} =\tilde p_{ij},\\
-\tilde p_{ij}< -(p_{ij}-m_{ij}),\\
f_{ji}(-\tilde p_{ij})<f_{ji}(- (p_{ij}-m_{ij})).
\end{eqnarray*}
We know by lemma \ref{max-intg}, that $f_{ji}(-( p_{ij}-m_{ij}))
\geq r_j$. Result is trivial for $f_{ji}(-( p_{ij}-m_{ij})) = r_j$.
The inequality is true for a maximum integer $p_{ij}-m_{ij}$ by
lemma \ref{max-intg}. As $p_{ij}-m_{ij} < \tilde p_{ij}$ that is why
$f_{ji}(-\tilde p_{ij}) \leq (old)r_j$. Hence the result is proved.
\end{proof}

The next lemma shows that we can find a matching in the bipartite
graph $(W, F; \widehat E_W)$ which satisfies the equations
$\eqref{X1}$ to \eqref{X3}.
\begin{lemma}\label{existence}
At STEP[4] of algorithm, we can always find a matching $X$
satisfying the equations \eqref{X1} to \eqref{X3}
\end{lemma}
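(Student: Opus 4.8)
The plan is to show that the bipartite graph $(W,F;\widehat E_W)$ always admits a matching satisfying $\eqref{X1}$–$\eqref{X3}$ by first establishing that condition $\eqref{X1}$ (matching all previously matched firms $\widetilde F$) can be met, and then observing that $\eqref{X2}$ and $\eqref{X3}$ are mere optimizations over the nonempty, finite family of matchings satisfying $\eqref{X1}$. The only real content is the feasibility claim for $\eqref{X1}$; the rest follows because a finite nonempty set of candidates always contains a minimizer of $\big||S_j|-\mu(j)\big|$ summed over $j$, and then a maximizer of $\sum_{(i,j)\in X} f_{ji}(-p_{ij})$ among those, so a matching satisfying all three conditions exists as soon as one satisfying $\eqref{X1}$ does.

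First I would argue that every firm $j\in\widetilde F$ — that is, every firm matched in the $(old)X$ from the previous iteration — still has at least one partner available in $(new)\widehat E_W$. The key point is that the algorithm only ever \emph{decreases} the salary vector (passing from $p$ to $\tilde p$ via $\eqref{til-p}$), and by Lemma \ref{max-intg} we have $f_{ji}(-\tilde p_{ij})\geq r_j$ for each $(i,j)\in U$, so the firm-side acceptability constraint defining $\widehat E_W$ in $\eqref{hatEW}$ is preserved for the pairs whose salary was adjusted; for pairs not in $U$ the salary is unchanged. Combined with the fact that the worker-side valuations only improve as salaries drop (so a worker matched to $j$ in $(old)X$ still finds $j$ among its most-preferred firms, or finds an even better one at the new salary — here one must track how $\tilde q$ and $\widetilde E_W$ update in STEP[3]), each firm in $\widetilde F$ retains an incident edge in $(new)\widehat E_W$, and moreover these retained edges can be chosen vertex-disjoint because they came from the disjoint sets $S_j$ of $(old)X$. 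Hence a matching covering all of $\widetilde F$ exists, establishing $\eqref{X1}$.

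The hard part will be the bookkeeping in the previous paragraph: one must be careful that when a pair $(i,j)$ falls into $L$ (so its salary hits the floor $a_{ij}$) or into $\widetilde W_0$ (so the worker becomes unwilling at the new salary), the firm $j$ does not thereby lose \emph{all} its partners. This is exactly where Lemma \ref{re-la02} is used: for $(i,j)\in L$ we have $f_{ji}(-\tilde p_{ij})\leq (old)r_j$, so such a pair is correctly removed from consideration, and one must show the firm still has a surviving partner from among $S_j\setminus(\text{removed pairs})$ — which follows because if $|S_j|=\mu(j)$ then $r_j$ was the \emph{minimum} of the $f_{ji}(-p_{ij})$ over $i\in S_j$, so at most the minimizing partners are at risk while the others remain strictly acceptable, and a firm with $|S_j|<\mu(j)$ had $r_j=0$ and so loses no partner to the $r_j$-threshold at all. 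Once $\eqref{X1}$ is secured, I would close by invoking finiteness: the set of matchings in $(W,F;\widehat E_W)$ is finite, the subset satisfying $\eqref{X1}$ is nonempty, so it has an element minimizing $\sum_j\big||S_j|-\mu(j)\big|$, and among those a further element maximizing $\sum_{(i,j)\in X} f_{ji}(-p_{ij})$; any such element satisfies $\eqref{X1}$–$\eqref{X3}$ simultaneously, completing the proof.
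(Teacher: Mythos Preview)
Your proposal heads toward the right conclusion but misses the one observation that makes the proof immediate, and in its absence you are led into both unnecessary work and a couple of wrong claims. The paper's argument is simply this: by definition \eqref{U}, $U=\{(i,j)\in\widetilde E_W: i\notin S_j\}$, so $U\cap(old)X=\emptyset$. The salary update \eqref{til-p} alters $p_{ij}$ only for $(i,j)\in U$, and the set modifications at STEP[3] go through $L,\widetilde W_0\subseteq U$; hence every edge of $(old)X$ survives unchanged into $(new)\widehat E_W$. Thus $(old)X$ itself is a matching in the new bipartite graph covering all of $\widetilde F$, so \eqref{X1} holds trivially, and your finiteness argument for \eqref{X2}--\eqref{X3} finishes the job.

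Once you see $U\cap(old)X=\emptyset$, your entire ``hard part'' evaporates. You write that ``one must show the firm still has a surviving partner from among $S_j\setminus(\text{removed pairs})$'', but nothing is ever removed from $S_j$: the removed pairs lie in $L\cup\widetilde W_0\subseteq U$, and every $(i,j)\in U$ has $i\notin S_j$ by definition. Lemmas~\ref{max-intg} and~\ref{re-la02} are therefore irrelevant here. You also assert that ``worker-side valuations only improve as salaries drop'', which is backwards --- $f_{ij}$ is strictly increasing in its argument, so lowering $p_{ij'}$ makes $j'$ \emph{less} attractive to $i$. That is in fact the correct reason a pair $(i,j)\in(old)X$ remains in $(new)\widetilde E_W$ (competing firms only get worse while $p_{ij}$ stays fixed), but your stated justification is the wrong way round.
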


\begin{proof}
If it is proved that $(old)X \subseteq (old)\hat E_W$ in each
iteration of the STEP[4]. It completes the proof. We update the
salary vector $p$ and $\tilde E$ in each iteration at STEP[2] and
STEP[3] by equation $\eqref{pij}$ and $\eqref{tile}$. It is clear
from these two equations that these modifications are only element
and subsets of $U$. Since $U \cap (old)X = \emptyset$. Therefore it
implies that $(old)X \subseteq (old)\hat E_W$.

%

\end{proof}

\begin{lemma}\label{la2}
The properties:
\begin{description}
\item[(i)]
For each element of $U\setminus\{L\cup \widetilde
W_0\}\not=\emptyset$, salary $p$ decreases at STEP[3], otherwise
salary vector remains the same.

\item[(ii)]
For each element of ${L\cup \widetilde W_0}$, at STEP[3],
$\widetilde E$ reduces otherwise remains same.

\item[(iii)]

The vector $r$ increases or remains the same.

\end{description}
\end{lemma}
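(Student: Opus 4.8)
**The plan is to verify each of the three items directly from the update rules defined in \eqref{mij}, \eqref{til-p}, \eqref{tilL}, \eqref{U2}, together with the information already extracted in Lemmas \ref{max-intg} and \ref{re-la02}.**

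For item (i), I would partition the set $U$ into the pairs in $L\cup\widetilde W_0$ and the pairs in $U\setminus\{L\cup\widetilde W_0\}$. For a pair $(i,j)\in U$ with $(i,j)\notin L$, the defining inequality of $L$ in \eqref{tilL} fails, so $p_{ij}-m_{ij}\geq a_{ij}$, and hence by \eqref{til-p} the new salary is $\tilde p_{ij}=p_{ij}-m_{ij}$; since $m_{ij}\geq 1$ by \eqref{mij}, this is a strict decrease, $\tilde p_{ij}\leq p_{ij}-1<p_{ij}$. For any pair $(i,j)\notin U$, the second branch of \eqref{til-p} gives $\tilde p_{ij}=p_{ij}$, so the salary is unchanged. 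That is exactly the statement of (i) once one notes that the pairs in $U\setminus\{L\cup\widetilde W_0\}$ are in particular not in $L$. (The pairs in $L$ themselves also change salary, to $a_{ij}$, but item (i) only asserts a decrease on $U\setminus\{L\cup\widetilde W_0\}$ and ``otherwise'' should be read as: off $U$.)

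For item (ii), I would argue that $\widetilde E$ can only lose elements, and that the lost elements are precisely accounted for by $L\cup\widetilde W_0$. Recall that at STEP[3] the algorithm sets $W_0:=W_0\cup\widetilde W_0$ and $F_0:=F_0\cup L$, and then recomputes $\widetilde E=E\setminus(W_0\cup F_0)$ by \eqref{tile}. Thus $(new)\widetilde E=(old)\widetilde E\setminus(\widetilde W_0\cup L)$, so $\widetilde E$ shrinks exactly by the pairs in $L\cup\widetilde W_0$ (intersected with $(old)\widetilde E$) and is otherwise unchanged. One should check that the new $p$ does not reintroduce any pair into $\widetilde E$: for $(i,j)\in U\setminus L$ Lemma \ref{max-intg} gives $f_{ji}(-\tilde p_{ij})\geq r_j\geq 0$, and $(i,j)\notin\widetilde W_0$ means $f_{ij}(\tilde p_{ij})\geq 0$, so such a pair stays mutually acceptable; for $(i,j)\notin U$ the salary is unchanged so its membership in $\widetilde E$ is unchanged; and the pairs in $L$ (salary dropped to $a_{ij}$, and $f_{ji}(-\tilde p_{ij})\leq (old)r_j$ by Lemma \ref{re-la02}, in fact $<0$ is forced by the definition of $m_{ij}$ being the \emph{minimum} integer satisfying \eqref{m2}) and in $\widetilde W_0$ are removed by construction. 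Hence (ii).

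For item (iii), $r_j$ is recomputed by \eqref{payoff2} from the new matching. The point is that salaries only go down (item (i)), and the valuation $f_{ji}(-z)$ is increasing in $-z$, so lowering $p_{ij}$ raises $f_{ji}(-p_{ij})$; moreover the new matching $X$ is chosen, via \eqref{X1}--\eqref{X3}, to contain $(old)X$-type structure while remaining inside $\widehat E_W$ (cf.\ Lemma \ref{existence}), so no firm's set of matched workers is replaced by strictly less valuable workers. Combining these, for each $j$ the quantity $\min\{f_{ji}(-p_{ij})\mid i\in S_j\}$ cannot decrease, and for a firm that was previously unmatched ($r_j=0$) the minimum over its new (possibly nonempty, possibly still empty) set is $\geq 0$; either way $r_j$ does not decrease. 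I expect the \textbf{main obstacle} to be item (iii): one must argue carefully that the optimization \eqref{X1}--\eqref{X3} at the new step cannot drop a firm's worst matched worker to something worse than before, which requires invoking that $\widehat E_W$ retains the old edges (via Lemma \ref{existence}) and that the lexicographic objective \eqref{X1}--\eqref{X3} prevents a firm from trading down; the monotonicity of the valuations then closes the gap. Items (i) and (ii) are essentially bookkeeping against the definitions.
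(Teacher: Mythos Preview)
Your proposal is correct and follows essentially the same route as the paper: each item is verified directly from the update rules \eqref{til-p}, \eqref{tilL}, \eqref{U2}, \eqref{tile} together with the monotonicity of the valuations and the optimality conditions \eqref{X1}--\eqref{X3}. If anything, your treatment of (ii) (checking that no pair is reintroduced into $\widetilde E$) and your identification of (iii) as the delicate step are more careful than the paper's own argument, which handles all three parts in the same spirit but more tersely.
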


\begin{proof}
(i) Initially the salary vector $p$ is given by \eqref{pij} and
updated in each iteration by \eqref{til-p}. From \eqref{til-p}, it
follows that for each $(i,j)\in U$, $\widetilde p_{ij}\leq p_{ij}$,
the inequality may be true for $(i,j)\in {L \cup \widetilde W_0}$.

(ii) Initially $\widetilde E$ is given by \eqref{tile} and it is
updated at STEP[3] in each iteration. At STEP[3], $F_0 = F_0 \cup L$
and $W_0 = \widetilde W_0 \cup W_0$, by equation \eqref{tile}
$\widetilde E$ reduces if $L \neq \emptyset$ and $\widetilde W_0
\neq \emptyset$ at STEP[3]. if $L = \emptyset$ and $\widetilde W_0 =
\emptyset$, $\widetilde E$ will not be changed by equation
\eqref{tile}.

(iii)   At the start Step[0], we set $r = 0$. Later on we modify r
by \eqref{payoff2}. The matching X satisfies condition $\eqref{X1}$
 In each iteration, this means that $\widetilde{F} \subseteq \widetilde{F}$. Also
 $(new)p \leq (old)p$ by part $(ii)$ of Lemma \ref{la2}. Thus $(new)r_j = f_{ji}(-(new)p_{ij}) \geq
 (old)r_j$. For $j\in (old)\widetilde{F}$,  as matching X
also satisfies $\eqref{X3}$. Moreover, $(new)r_j = (old)r_j = 0$
$\forall j\in F\setminus (new)\widetilde{F}$.  Hence, the vector $r$
increases or remains same.

\end{proof}

\begin{theorem}\label{st}
If the algorithm terminates it produces a stable outcome.
\end{theorem}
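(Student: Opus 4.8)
The plan is to show that when the loop exits at STEP[1] (so $U=\emptyset$) the current outcome $(X;p,q,r)$ satisfies (ps1) and (ps2). Condition (ps1) is almost free: by Lemma~\ref{existence} a matching $X$ in $(W,F;\widehat E_W)$ exists, and $\widehat E_W\subseteq\widetilde E_W\subseteq\widetilde E$ by \eqref{hatEW} and \eqref{til-EW}; since every pair of $\widetilde E$ satisfies $f_{ij}(p_{ij})\ge 0$ and $f_{ji}(-p_{ij})\ge 0$ by \eqref{til-E}, so does every $(i,j)\in X$, which is (ps1). The same inclusions, together with \eqref{til-q}, \eqref{payoff1} and \eqref{payoff2}, give $q_i=\tilde q_i\ge 0$ for a matched worker $i$, $q_i=0$ otherwise, and $r_j\ge 0$ for every $j$; I use these signs below without further comment.

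For (ps2) fix $(i,j)\in E$ and $\theta\in[a_{ij},b_{ij}]_{\ZZ}$. Since $f_{ij}$ is strictly increasing and $\theta\mapsto f_{ji}(-\theta)$ is strictly decreasing, it suffices to establish the two inequalities $f_{ij}(p_{ij})\le q_i$ and $f_{ji}(-(p_{ij}+1))\le r_j$ (the latter vacuous when $p_{ij}=b_{ij}$): then $\theta\le p_{ij}$ forces $f_{ij}(\theta)\le q_i$, and $\theta\ge p_{ij}+1$ forces $f_{ji}(-\theta)\le r_j$. I would dispatch the first inequality by cases on where $(i,j)$ sits at termination, using $E=\widetilde E\cup W_0\cup F_0$ from \eqref{tile}. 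If $(i,j)\in\widetilde E_W$ then $U=\emptyset$ and \eqref{U} force $i\in S_j$, so $q_i=f_{ij}(p_{ij})$. If $(i,j)\in\widetilde E\setminus\widetilde E_W$ then $\widetilde E$, hence $\widetilde E_W$, is nonempty at $i$, so by the previous case $i$ is matched and $q_i=\tilde q_i>f_{ij}(p_{ij})$. If $(i,j)\in W_0$ then the pair entered $W_0$ with $f_{ij}(p_{ij})<0\le q_i$ and $p_{ij}$ has not moved since. If $(i,j)\in F_0$ I instead prove $f_{ji}(-\theta)\le r_j$ for \emph{every} feasible $\theta$ directly: either the pair was placed in $F_0$ at STEP[0a], so by \eqref{pij} no integer of $[a_{ij},b_{ij}]_{\ZZ}$ has $f_{ji}(-\cdot)\ge 0$ and $f_{ji}(-\theta)<0\le r_j$, or it was placed there through $L$, so $p_{ij}=a_{ij}$ and $f_{ji}(-a_{ij})\le(old)r_j\le r_j$ by Lemma~\ref{re-la02} and Lemma~\ref{la2}(iii), whence $f_{ji}(-\theta)\le f_{ji}(-a_{ij})\le r_j$; in either event (ps2) already holds for this pair.

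It remains, in the first three cases and with $p_{ij}<b_{ij}$, to prove $f_{ji}(-(p_{ij}+1))\le r_j$. If $p_{ij}$ still carries the value assigned by \eqref{pij}, that value is the largest integer of $[a_{ij},b_{ij}]_{\ZZ}$ with $f_{ji}(-p_{ij})\ge 0$, so $f_{ji}(-(p_{ij}+1))<0\le r_j$. Otherwise $p_{ij}$ was last lowered during some STEP[2], at which $(i,j)\in U$; a short check---a pair that gets clamped to $a_{ij}$ lands in $L\subseteq F_0$, hence outside $\widetilde E$, and has already been handled---shows that in the situations still at hand the update set $p_{ij}=(old)p_{ij}-m_{ij}\ge a_{ij}$, so by the maximality clause of Lemma~\ref{max-intg} the integer $p_{ij}$ is the last one at which $f_{ji}(-\cdot)$ stays above the then-current $r_j$, giving $f_{ji}(-(p_{ij}+1))\le(old)r_j\le r_j$ by Lemma~\ref{la2}(iii). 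This finishes (ps2), and hence the theorem.

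The delicate point is precisely this last inequality: it rests on the \emph{maximality} half of Lemma~\ref{max-intg}---that $p_{ij}$ never exceeds the last integer at which $f_{ji}(-\cdot)$ clears whichever lower bound ($0$, or the then-current $r_j$) was in force when $p_{ij}$ was last set---and on carrying that bound forward to the terminal $r_j$ by the monotonicity of $r$ in Lemma~\ref{la2}(iii); the genuine labour is the bookkeeping of which bound was active at the last modification of $p_{ij}$, plus the boundary sub-cases $p_{ij}=a_{ij}$ and $p_{ij}=b_{ij}$. One auxiliary observation keeps the case split honest: at termination no worker has two distinct firms tied for its top acceptable valuation, for such a worker would contribute a pair to $U$, contradicting $U=\emptyset$; this is what allows ``$(i,j)\in\widetilde E_W$'' to be upgraded to ``$(i,j)\in X$''.
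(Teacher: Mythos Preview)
Your proof is correct and rests on the same two ingredients as the paper's: the maximality clause of Lemma~\ref{max-intg} to bound the firm side $f_{ji}(-\theta)$ against $r_j$, and the fact that $U=\emptyset$ at termination to bound the worker side $f_{ij}(\theta)$ against $q_i$.

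The organization differs, however. The paper argues (ps2) by contradiction: assuming some $c\in[a_{ij},b_{ij}]_{\ZZ}$ violates both inequalities, it splits only on whether $p_{ij}<c$ (invoking Lemma~\ref{max-intg}) or $p_{ij}\ge c$ (invoking $U=\emptyset$), and does not separately track the sets $W_0$, $F_0$, or whether $p_{ij}$ still carries its initial value from \eqref{pij}. You instead give a direct argument, reducing (ps2) to the two pivot inequalities $f_{ij}(p_{ij})\le q_i$ and $f_{ji}(-(p_{ij}+1))\le r_j$ and then running a case analysis on where $(i,j)$ sits in the partition $E=\widetilde E\cup W_0\cup F_0$, with a further split on whether $p_{ij}$ has ever been lowered. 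This buys you explicit handling of the boundary cases---pairs absorbed into $F_0$ via $L$, pairs in $W_0$, the situation $p_{ij}=b_{ij}$---that the paper's two-line contradiction glosses over. Your observation that at termination each worker has at most one firm in $\widetilde E_W$ (else $U\neq\emptyset$) is also a detail the paper leaves implicit. In short: same skeleton, but your version is the more careful of the two.
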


\begin{proof}
 We know that $X \subseteq \widetilde E$. Initially $\widetilde E$ is
defined by \eqref{tile} and afterwards it is updated at STEP[3] in
each iteration. Thus $f_{ij}(p_{ij})$ and $f_{ji}(-p_{ij})$ are
non-negative $\forall (i, j) \in \widetilde E$. Therefore,
$f_{ij}(p_{ij}) = 0$ and $f_{ji}(-p_{ij}) = 0$ $\forall (i, j) \in
X$. This shows that the X satisfies (ps1) at termination. On
contrary to (ps2), assume that there exist a $c \in [ a, b]$ and
$(i, j) \in E$ such that

               $f_{ij}(c) > q_i$ and $f_{ji}(-c) > r_j $.

If we take $p_{ij} < c $ it yields $f_{ji}(-p_{ij}) > f_{ji}(-c) >
rj$. But according to Lemma \ref{max-intg}, $p_{ij}$ is the maximum
integer for which this inequality holds. Thus $p_{ij} < c$ is not
true. Let us consider that $p_{ij} = c$, which implies that

\begin{equation}\label{z}
f_{ij}(p_{ij})=f_{ij}(c) > q_i.
\end{equation}
However, at termination we have $U= \emptyset$ means that $(i, j)
\in U$ and since (i, j) are not matched, therefore, $f_{ij}(p_{ij})
< \widetilde q_i = q_i$. A contradiction to \eqref{z}. Thus (ps2)
holds when the algorithm terminates.
\end{proof}

\begin{theorem}\label{termination}
The algorithm terminates after finite number of iterations.
\end{theorem}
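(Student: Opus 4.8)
\textbf{Proof proposal for Theorem \ref{termination}.}

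The plan is to argue by a potential (monotone-quantity) argument combined with the finiteness of the underlying data. The key observation is that the algorithm only ever decreases the salary vector $p$ and only ever enlarges the exclusion sets $W_0$ and $F_0$; both of these are bounded processes. First I would record that each coordinate $p_{ij}$ lives in the finite set $[a_{ij},b_{ij}]_{\ZZ}$, and that by STEP[2] together with \eqref{mij} every update of an active pair $(i,j)\in U\setminus L$ strictly decreases $p_{ij}$ by at least one integer unit (since $m_{ij}\geq 1$). Hence for a fixed pair $(i,j)$ the number of iterations at which $p_{ij}$ is strictly decreased is at most $b_{ij}-a_{ij}$, a finite bound. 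Summing over the finitely many pairs $(i,j)\in E$ gives a finite bound on the total number of iterations at which \emph{some} coordinate of $p$ strictly decreases.

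Next I would handle the iterations in which $p$ does \emph{not} change. By Lemma \ref{la2}(i), if $p$ is unchanged at STEP[3] then every pair in $U$ lies in $L\cup\widetilde W_0$, and then by Lemma \ref{la2}(ii) the set $\widetilde E$ strictly shrinks (at least one pair is moved into $W_0\cup F_0$ and never returns, since $W_0$ and $F_0$ are only ever enlarged by STEP[3]). Because $\widetilde E\subseteq E$ is finite, this can happen only finitely many times. Combining the two counts: between any two consecutive ``$\widetilde E$ shrinks'' events there are only finitely many ``$p$ decreases'' events, and there are only finitely many of each type overall, so the algorithm executes STEP[2]–STEP[4] only finitely often. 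It remains to note that the loop exits precisely when $U=\emptyset$ is detected at STEP[1], and that each pass through STEP[0b]/STEP[4] — finding the matching $X$ satisfying \eqref{X1}–\eqref{X3} in the finite bipartite graph $(W,F;\widehat E_W)$, which exists by Lemma \ref{existence} — is itself a finite computation.

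A slightly cleaner way to package this is to exhibit a single strictly-decreasing integer-valued potential, bounded below, that drops at every iteration in which $U\neq\emptyset$; for instance
\[
\Phi \;=\; \sum_{(i,j)\in E}\bigl(p_{ij}-a_{ij}\bigr)\;+\;|\widetilde E|.
\]
Each term is a nonnegative integer, so $\Phi\geq 0$ always. In an iteration with $U\neq\emptyset$: either some $p_{ij}$ strictly decreases, lowering the first sum by at least $1$ while $|\widetilde E|$ cannot increase (it only shrinks, by Lemma \ref{la2}(ii)); or no $p_{ij}$ changes, in which case $U\subseteq L\cup\widetilde W_0$ is nonempty and $|\widetilde E|$ strictly decreases by at least $1$. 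Either way $\Phi$ drops by at least $1$ per iteration, so the number of iterations is at most the initial value $\sum_{(i,j)\in E}(b_{ij}-a_{ij})+|E|$, which is finite.

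I expect the main obstacle to be the bookkeeping that guarantees $W_0$ and $F_0$ never lose elements and that a pair removed from $\widetilde E$ is never reinstated; one must check that the only redefinitions of $W_0,F_0,\widetilde E$ occur at STEP[3] via the monotone updates $W_0\leftarrow W_0\cup\widetilde W_0$, $F_0\leftarrow F_0\cup L$, so that $\widetilde E$ is genuinely non-increasing along the run. The secondary subtlety is the ``$p$ unchanged'' case: one must invoke Lemma \ref{la2}(i) to conclude that $U\neq\emptyset$ forces $U\subseteq L\cup\widetilde W_0$ and hence, via Lemma \ref{la2}(ii), a strict decrease in $|\widetilde E|$ — this is exactly what prevents the algorithm from stalling in an infinite run of salary-preserving iterations. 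Once these monotonicity facts are in hand, the potential $\Phi$ finishes the argument immediately.
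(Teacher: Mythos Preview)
Your proposal is correct and follows essentially the same approach as the paper: split each iteration into the case where some $p_{ij}$ strictly drops (finitely many times since $p$ is integer-valued and bounded below by $a$) and the case where it does not, in which event $U\subseteq L\cup\widetilde W_0$ forces $\widetilde E$ to shrink (at most $|E|$ times), exactly via Lemma~\ref{la2}(i)--(ii). Your explicit potential $\Phi=\sum_{(i,j)}(p_{ij}-a_{ij})+|\widetilde E|$ is a cleaner repackaging of the same two monotone quantities the paper tracks, and your remark about the monotone updates $W_0\leftarrow W_0\cup\widetilde W_0$, $F_0\leftarrow F_0\cup L$ (so that $\widetilde E=E\setminus(W_0\cup F_0)$ genuinely never grows) makes rigorous a point the paper leaves implicit.
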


\begin{proof}

Termination of the algorithm depends upon set of mutually acceptable
pairs and salary vector $p$. By the Lemma \ref{la2}, part (ii),
$\widetilde E$, reduces either $L \neq \emptyset$, and $\widetilde
W_0, \neq \emptyset$, or remain unchanged. This case is true at most
$|E|$, times.

If $L = \widetilde W_0 = \emptyset$, then, by part (i) of Lemma
\ref{la2}, $p_{ij}$ decreases for each $(i, j) \in U$. Otherwise,
$p$ remains unchanged. As we know that $p$ is bounded and discrete,
therefore, it can be decreased a finite number of times. This proves
that in either case our algorithm terminates after a finite number
of iterations.

This is the most important result which establishes the existence of
pairwise stability for our model.


\end{proof}

\section{Open Problem}
\begin{itemize}
\item We generalized a one-to-one matching model of Ali and Farooq
\cite{ya2010} in two directions. Firstly, our model is many-to-one
model, where as the model given by Ali and Farooq \cite{ya2010} is
one-to-one. Secondly the we the valuations of the players in our
model are represented by strictly increasing general functions of
money. Money is not a continuous variable here. We can establish
similar results by taking money as continuous variable, it will
include a large number of well known models as special cases. A
many-to-many version of our model would be an interesting problem.


\item A slightly harder but worth while problem  would to design a
polynomial time algorithm for these models .
\end{itemize}

\section{Concluding Remarks}
We have represented a many-to-one matching market in which
valuations are represented by general non linear increasing
functions. Each firm can hire the workers according to its demand,
but workers can not work for more than one firms. In this section we
represent some key functions and important remarks about the model
presented in this chapter.
\begin{enumerate}
\item As proved in theorem $(3.6.5)$, the algorithm always produces a
stable many-to-one matching, at termination. This guarantees the
existence of stable outcome for our model.\\

\item The algorithm presented here outputs a worker-optimal
matching. This work is obvious from the result discussed in Lemma
\ref{max-intg}, and the due increasing nature of valuation function.

\item Marriage model by Gale and Shapley \cite{GS}, and model
presented by Ali and Farooq \cite{ya2010}, Ali and Javid model
\cite{ay2014}, are the special cases of this model.
\end{enumerate}


\end{document}